\documentclass[10pt]{amsart}
\usepackage{amsthm,hyperref}
\usepackage{times,mathptm}
\usepackage{amscd,amsfonts,amssymb}
\usepackage{graphicx,xy,tikz}
\usepackage{color}
\setlength{\textwidth}{12.5cm}
\setlength{\textheight}{18.5cm}
\setlength{\parindent}{16pt}
\usepackage{amsthm}
\numberwithin{equation}{section}
\newtheorem{theorem}{Theorem}[section]
\newtheorem{lemma}[theorem]{Lemma}
\newtheorem{proposition}[theorem]{Proposition}

\theoremstyle{definition}

\newtheorem{definition}[theorem]{Defintion}

\begin{document}

\title[Existence and Uniqueness of Solutions for a Nonlinear Equation with Convection Term]%
{Existence and Uniqueness of Solutions for a Nonlinear Equation with Convection Term}
\author[Mustapha Ait Hammou]%
{Mustapha Ait Hammou*}

\newcommand{\acr}{\newline\indent}

\address{\llap{*\,}Laboratory LAMA\acr
                   Department of Mathematics\acr
                   Sidi Mohamed Ben Abdellah university\acr
                   Fez\acr
                   MOROCCO}
\email{mustapha.aithammou@usmba.ac.ma}



\subjclass[2010]{35J60, 47H11, 35D30, 35A01, 35A02} 
\keywords{Nonlinear elliptic equations, Topological degree, Weak solution, Existence and Uniqueness}

\begin{abstract}
In this paper, we consider the existence and uniqueness of weak solutions of a nonlinear elliptic equation with a variable exponent, a monotonic type operator and a convection term. With the topological degree theory, we prove the existence of at least one weak solution under some Leray-Lions and growth conditions. Moreover, we obtain the uniqueness of the solution of the problem under some additional assumptions. Our results generalize and improve existing results with another approach.
\end{abstract}

\maketitle

\section{Introduction}
In the present paper, we focus on the existence and uniqueness of a weak solution for a class of nonlinear elliptic boundary value problems in the framework of the Lebesgue and Sobolev spaces with variable exponent. The use of such spaces is justified by the study of several materials that present inhomogeneities and for which the context given by classical spaces is not adequate. Indeed, for such materials, the exponents involved in the constitutive law could be variable. The great attention on this topic is due to the many and various applications concerning thermorheological
fluids \cite{AR}, image restoration \cite{CLR}, electrorheological fluids \cite{R,SSE} and elastic materials \cite{Zh}.\par
Consider the following problem with a Neuman boundary condition
\begin{equation}\label{Pr1}
\left\{\begin{array}{ccc}
-div\;a(x,\nabla u)+\lambda|u|^{q(x)-2}u=f(x,u), &x\in\Omega&,\\
a(x,\nabla u).\eta=0, &x\in\partial\Omega&,
\end{array}\right.
\end{equation}
where $\lambda \in \mathbb{R}$, $\Omega\subset \mathbb{R}^N$ is a bounded domain, $a$ generates a nonlinear operator of Leray-Lions $A$ from
 $W_0^{1,p(\cdot)}(\Omega)$ to its dual $W^{-1,p'(\cdot)}(\Omega)$ which is defined by
$$A(u)=-diva(x,\nabla u),$$
$p(\cdot)$ and $ q(\cdot)$ are two variable exponents satisfying some conditions to be seen in the paper suite, $f$ is a Carath\'eodory function
satisfying a growth condition with a variable exponent that is suitably controlled by $p(\cdot)$ and $\eta$ is the outward unit normal to $\partial\Omega$.\par
In \cite{AO}, the authors showed the existence and uniqueness of a weak solution to the problem
$$ -\Delta_pu+m(x)|u|^{p-2}u=f(x,u) \mbox{ in } \mathbb{R}^N $$
which involves the $p$-Laplacian through Browder's theorem, where\\
$1<p<N, \;N\geq3$ and under some conditions for the functions $m$ and $f$.\\
When function $f$ is null and $a(x,\nabla u)=|\nabla u|^{q(x)-2}\nabla u$, the problem \eqref{Pr1} has been treated as an eigenvalue and eigenvector problem \cite{XQD}.\\
 Zhao et al. \cite{ZZX} have established the existence and the multiplicity of weak solutions of the boundary-value problem
 \begin{equation*}
\left\{\begin{array}{ccc}
-div\;a(x,\nabla u)+|u|^{p-2}u=\lambda f(x,u), &x\in\Omega&,\\
u(x)=constant,&x\in\partial\Omega&,\\
\int_{\partial\Omega}a(x,\nabla u).n\;ds=0,& &
\end{array}\right.
\end{equation*}
They found one nontrivial solution by the mountain pass lemma in \cite{PRS}, when the nonlinearity has a $(p-1)$-superlinear growth at infinity, and two nontrivial solutions by minimization and mountain pass when the nonlinear term has a $(p-1)$-sublinear growth at infinity.\\
For $\lambda=0$ and $a(x,\nabla u)=|\nabla u|^{q(x)-2}\nabla u$, P.S. Ilia\c{s} in \cite{I} and Fan and Zhang in \cite{FZ} gave several sufficient conditions for the existence of weak solutions for that problem under Dirichlet's boundary condition.\par
In this paper, we study problem \eqref{Pr1}, on the one hand as a kind of generalisation of the few previous works, and on the other hand, using another method based on the Topological Degree Theory developed by Berkovits \cite{Ber} for some classes of operators in Banach reflexive spaces (For some applications of this degree, the reader can see \cite{AA,AAB,AAK,NV}).\par
This document is organised as follows: Section 2 is reserved for some mathematical preliminaries. In section 3, we give our basic assumptions, some technical
lemmas and we give and prove our results of existence and uniqueness.
\section{Mathematical Preliminaries}
\subsection{Definitions and proposition}
Let us start with a short reminder of the classes of operators mentioned in the introduction and of an important proposition which will be the key to proving
the existence of at least one weak solution of problem \eqref{Pr1}.\\
Let $X$ be a real separable reflexive Banach space with dual $X^*$ and with continuous pairing $\langle.\;,\;.\rangle$ and let $\Omega$ be a nonempty subset of $X$. The symbol $\rightarrow (\rightharpoonup)$ stands for strong (weak) convergence.\\
Let  $Y$ be a real Banach space. We recall that a mapping $F:\Omega\subset X \rightarrow Y$ is \it bounded, \rm if it takes any bounded set into a bounded set. $F$ is said to be \it demicontinuous, \rm if for any $(u_n)\subset\Omega$, $u_n \rightarrow u$ implies $F(u_n)\rightharpoonup F(u)$. $F$ is said to be \it compact \rm if it is continuous and the image of any bounded set is relatively compact.A mapping $F:\Omega\subset X\rightarrow X^*$ is said to be \it of class $(S_+)$\rm, if for any $(u_n)\subset\Omega$ with $u_n \rightharpoonup u$ and $limsup\langle Fu_n,u_n - u\rangle\leq 0 $, it follows that $u_n \rightarrow u$. $F$ is said to be \it quasimonotone \rm, if for any $(u_n)\subset\Omega$ with $u_n\rightharpoonup u$, it follows that $limsup\langle Fu_n,u_n - u\rangle\geq 0$.\\
For any operator $F:\Omega\subset X\rightarrow X$ and any bounded operator $T:\Omega_1\subset X\rightarrow X^*$ such that $\Omega\subset\Omega_1$, we say that $F$ satisfies condition $(S_+)_T$, if for any $(u_n)\subset\Omega$ with $u_n \rightharpoonup u$, $y_n:=Tu_n\rightharpoonup y$ and
 $limsup\langle Fu_n,y_n-y\rangle\leq 0$, we have $u_n\rightarrow u$.
  For any $\Omega\subset X$, we consider the following classes of operators:
\begin{eqnarray*}
 \nonumber 
  \mathcal{F}_1(\Omega) &:=& \{F:\Omega\rightarrow X^*\mid F \mbox{ is bounded, demicontinuous and satifies condition }(S_+)\}, \\
  \mathcal{F}_{T,B}(\Omega) &:=& \{F:\Omega\rightarrow X\mid F \mbox{ is bounded, demicontinuous and satifies condition }(S_+)_T\}, \\
  \mathcal{F}_T(\Omega) &:=& \{F:\Omega\rightarrow X\mid F \mbox{ is demicontinuous and satifies condition }(S_+)_T\},\\
\end{eqnarray*}
 \begin{proposition}\label{l.proof}
 Let $S:X\rightarrow X^*$ and $T:X^*\rightarrow X$ be two operators bounded and continuous such that $S$ is quasimonotone and $T$ is of class $(S_+)$.
 If $$\Lambda:=\{v\in X^*|v+tSoTv=0 \mbox{ for some } t\in[0,1]\}$$ is bounded in $X^*$, then the equation $$v+SoTv=0$$ admits at lest one solution in $X^*.$
 \end{proposition}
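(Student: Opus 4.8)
The plan is to derive the existence of a solution from Berkovits' topological degree for the class $\mathcal{F}_{T,B}$, applied in the space $X^{*}$ (whose dual is $X^{**}=X$ by reflexivity), with the operator $T\colon X^{*}\to X=(X^{*})^{*}$ serving as the admissible base map. Since $T$ is bounded, continuous and of class $(S_{+})$, it belongs to $\mathcal{F}_{1}(X^{*})$, so such a degree is available. The candidate map is $F:=I+S\circ T\colon X^{*}\to X^{*}$, and I would use the affine homotopy $h\colon[0,1]\times X^{*}\to X^{*}$, $h(t,v)=v+t\,S(Tv)$, which joins $h(0,\cdot)=I$ to $h(1,\cdot)=F$. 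Because $\Lambda$ is bounded, I would fix $R>0$ with $\Lambda\subset B_{R}:=\{v\in X^{*}:\|v\|<R\}$; then $h(t,v)\neq 0$ for every $v\in\partial B_{R}$ and every $t\in[0,1]$, and once $h$ is shown to be an admissible homotopy, homotopy invariance together with the normalization property gives $\deg(F,B_{R},0)=\deg(I,B_{R},0)=1\neq 0$. The existence property of the degree then produces $v\in B_{R}$ with $v+S(Tv)=0$.

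The technical core is to check that, for each fixed $t\in[0,1]$, the map $h(t,\cdot)=I+tS\circ T$ belongs to $\mathcal{F}_{T,B}(\overline{B_{R}})$ (in particular this covers $I$ and $F$), and that $h$ is an admissible affine homotopy over $T$. Boundedness is immediate since $I$, $S$ and $T$ are bounded, and demicontinuity holds because $S\circ T$ is continuous as a composition of continuous maps, hence so is $I+tS\circ T$. What remains is condition $(S_{+})_{T}$: assume $v_{n}\rightharpoonup v$ in $X^{*}$, $y_{n}:=Tv_{n}\rightharpoonup y$ in $X$, and $\limsup_{n}\langle v_{n}+tS(Tv_{n}),\,y_{n}-y\rangle\le 0$; I must show $v_{n}\to v$.

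To this end I would first exploit the quasimonotonicity of $S$: since $y_{n}=Tv_{n}\rightharpoonup y$, we have $\limsup_{n}\langle Sy_{n},\,y_{n}-y\rangle\ge 0$, and the same holds along every subsequence of $(y_{n})$. A short subsequence argument — pass to a subsequence realizing $\limsup_{n}\langle v_{n},y_{n}-y\rangle$ and along which $\langle Sy_{n},y_{n}-y\rangle$ converges, necessarily to a nonnegative limit — then forces $\limsup_{n}\langle v_{n},\,y_{n}-y\rangle\le 0$, since $t\ge 0$ and the sum has nonpositive $\limsup$. Next I would use reflexivity to swap the pairing: because $v_{n}\rightharpoonup v$ in $X^{*}$ tests against $X=X^{**}$ and $y_{n}\rightharpoonup y$ in $X$ tests against $X^{*}$, one computes directly that $\langle Tv_{n},\,v_{n}-v\rangle-\langle v_{n},\,Tv_{n}-y\rangle=\langle v_{n},y\rangle-\langle v,y_{n}\rangle\to 0$. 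Combining these two facts gives $\limsup_{n}\langle Tv_{n},\,v_{n}-v\rangle=\limsup_{n}\langle v_{n},\,y_{n}-y\rangle\le 0$, and since $T$ is of class $(S_{+})$ we conclude $v_{n}\to v$. Running the same argument with $t_{n}\to t$ in place of a fixed $t$ shows that $h$ is an admissible homotopy in $\mathcal{F}_{T,B}$.

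I expect this subsequence/pairing-swap step to be the main obstacle: one must be careful that quasimonotonicity of $S$ only yields information in $\limsup$ form, so a crude additivity estimate on $\langle v_{n}+tSy_{n},y_{n}-y\rangle$ is not enough, and that the two occurrences of the duality pairing — $\langle v_{n},Tv_{n}-y\rangle$ with $v_{n}\in X^{*}$ versus $\langle Tv_{n},v_{n}-v\rangle$ with $Tv_{n}\in X$ — agree only asymptotically, through reflexivity. Once $(S_{+})_{T}$ and the admissibility of $h$ are established, the remainder is a routine application of the three standard properties (normalization, homotopy invariance, existence) of the Berkovits degree from \cite{Ber}.
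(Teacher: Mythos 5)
Your proposal is correct and follows essentially the same route as the paper: fix $R$ with $\Lambda\subset B_R(0)$, check that $I$ and $I+S\circ T$ lie in $\mathcal{F}_{T,B}(\overline{B_R(0)})$, and apply homotopy invariance and normalization of the Berkovits degree to the affine homotopy $h(t,v)=v+tS(Tv)$. The only difference is that you verify the $(S_+)_T$ condition directly (via the quasimonotonicity subsequence argument and the pairing swap through reflexivity), whereas the paper simply cites Lemmas 2.2 and 2.4 of \cite{Ber} for this step; your verification is sound.
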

 \begin{proof}
 Since $\Lambda$ is bounded in $X^*$, there exists $R>0$ such
that $$\|v\|_{X^*}<R \mbox{ for all } v\in \Lambda.$$
This says that
$$v+tSoTv\neq0 \mbox{ for all } v\in\partial B_R(0) \mbox{ and all } t\in[0,1]$$
where $B_R(0)$ is the ball of center $0$ and radius $R$ in $X^*$.\\
From \cite[Lemma 2.2 and 2.4]{Ber} it follows that
$$I+SoT\in \mathcal{F}_T(\overline{B_R(0)})\mbox{ and } I=LoT\in \mathcal{F}_T(\overline{B_R(0)}).$$
Since the operators $I$, $S$ and $T$ are bounded, $I+SoT$ is also bounded. We conclude that
$$I+SoT\in \mathcal{F}_{T,B}(\overline{B_R(0)})\mbox{ and } I\in\mathcal{F}_{T,B}(\overline{B_R(0)}).$$
Consider a homotopy $H:[0,1]\times\overline{B_R(0)}\rightarrow X^*$ given by
$$H(t,v):=v+tSoTv \mbox{ for } (t,v)\in[0,1]\times\overline{B_R(0)}.$$
Let us apply the homotopy invariance and normalization property  of the Berkovits degree (which we note $d$) introduced in \cite{Ber}, we get
$$d(I+SoT,B_R(0),0)=d(I,B_R(0),0)=1,$$
and hence there exists a point $v\in B_R(0)$ such that
$$v+SoTv=0.$$
 \end{proof}
 \subsection{Functional framework}
In the sequel, $\Omega$ is a smooth bounded domain in $\mathbb{R}^N\;(N\geq2)$.\par
In order to discuss problem \eqref{Pr1}, we start with the definition of the variable exponent Lebesgue spaces $L^{p(.)}(\Omega)$ and the variable
exponent Sobolev spaces $W_0^{1,p(.)}(\Omega)$, and some properties of them; for more details, see \cite{FZ1,KR}.\par
Let us denote $$C_+(\overline{\Omega})=\{h\in C(\overline{\Omega}):h(x)>1 \mbox{ for every } x\in\overline{\Omega}\}.$$
For any $h\in C_+(\overline{\Omega})$, we write
$$h^-:=\min_{x\in\overline{\Omega}}h(x)\;\;,\;\;h^+:=\max_{x\in\overline{\Omega}}h(x).$$
For any $p\in C_+(\overline{\Omega})$, we define the variable exponent Lebesgue space by
$$L^{p(\cdot)}(\Omega)=\{u;\ u:\Omega \rightarrow \mathbb{R} \mbox{  is measurable and }\rho_{p(\cdot)}(u)<\infty \}.$$
where
$$\rho_{p(\cdot)}(u)=\int_\Omega|u(x)|^{p(x)}\;dx.$$
We consider this space to be endowed with the so-called {\it Luxemburg norm}:
$$\|u\|_{p(\cdot)}=\inf\{{\lambda>0}: \rho_{p(.)}\Big(\frac{u}{\lambda}\Big)\leq 1\}.$$
We define the variable exponent Sobolev spaces $W^{1,p(\cdot)}(\Omega)$ by
$$W^{1,p(\cdot)}(\Omega)=\{u\in L^{p(\cdot)}(\Omega): |\nabla u | \in L^{p(\cdot)}(\Omega)\}$$
equipped with the norm
$$\|u\|_{W^{1,p(\cdot)}}=\|u\|_{p(\cdot)}+\|\nabla u\|_{p(\cdot)}.$$
The space $W_0^{1,p(\cdot)}(\Omega)$ is defined by the closure of $C_0^\infty(\Omega)$ in $W^{1,p(\cdot)}(\Omega)$. With these norms, the spaces
$L^{p(\cdot)}(\Omega),W^{1,p(\cdot)}(\Omega)$ and $W_0^{1,p(\cdot)}(\Omega)$ are separable reflexive Banach spaces.\par
The conjugate space of $L^{p(\cdot)}(\Omega)$ is $L^{p'(\cdot)}(\Omega)$ where $\frac{1}{p(x)}+\frac{1}{p'(x)}=1$.\\
For any $u\in L^{p(\cdot)}(\Omega)$ and $v\in L^{p'(\cdot)}(\Omega)$, H\"older inequality holds \cite[Theorem 2.1]{KR}:
\begin{equation}\label{Hol.in}
\left|\int_\Omega uv\;dx\right|\leq\left(\frac{1}{p^-}+\frac{1}{{p^{'}}^-}\right)\|u\|_{p(\cdot)}\|v\|_{p'(\cdot)}\leq2\|u\|_{p(\cdot)}\|v\|_{p'(\cdot)}.
\end{equation}
If $p(\cdot),\;q(\cdot)\in C_+(\overline{\Omega}),\; q(\cdot) \leq p(\cdot)$ a.e. in $\Omega$ then there exists the continuous embedding\\
$L^{p(\cdot)}(\Omega)\rightarrow L^{q(\cdot)}(\Omega)$.
\par
In this paper, we suppose that such that $p(\cdot)$ satisfies the log-H\"{o}lder continuity condition, i.e. there exists $C >0$ such that for all
$x,y \in \Omega$, $x\neq y$, one has
\begin{equation}\label{Hol}
  |p(x)-p(y)|\log\Big(e+\frac{1}{|x-y|}\Big)\leq C
\end{equation}
\par
An interesting feature of generalized variable exponent Sobolev space is that smooth functions are not dense in it without additional assumptions on the exponent $p(\cdot).$ However, when the exponent satisfies the $\log$-H\"older condition \eqref{Hol}, we recall the Poincar\'e inequality (see \cite[Theorem 8.2.4]{DHHR} and \cite[Theorem 2.7]{FZ1}) : there exists a constant $C>0$ depending only on $\Omega$ and the function $p$ such that
\begin{equation}\label{ptcar}
  \|u\|_{p(\cdot)}\leq C\|\nabla u\|_{p(\cdot)},\;\;\forall u\in W_0^{1,p(\cdot)}(\Omega),
\end{equation}
In particular, the space $W^{1,p(\cdot)}_{0}(\Omega)$ has a norm given by
$$\|u\|_{1,p(\cdot)}=\|\nabla u\|_{p(\cdot)},$$
which is equivalent to the norm $\|\cdot\|_{W^{1,p(\cdot)}}$.\\ Moreover, the embedding $W^{1,p(\cdot)}_{0}(\Omega)\hookrightarrow
L^{p(\cdot)}(\Omega)$ is compact (see \cite{KR}).\\
The space $(W_0^{1,p(\cdot)}(\Omega),\|\cdot\|_{1,p(\cdot)})$ is also a Banach space separable and reflexive .\\
\par
The dual space of $W_0^{1,p(\cdot)}(\Omega)$, denoted $W^{-1,p'(\cdot)}(\Omega)$, is equipped with the norm
$$\|v\|_{-1,p'(\cdot)}=\inf\{\|v_0\|_{p'(\cdot)}+\sum_{i=1}^{N}\|v_i\|_{p'(\cdot)}\},$$ where the infinimum is taken on all possible decompositions $v=v_0-div F$ with $v_0\in L^{p'(\cdot)}(\Omega)$ and $F=(v_1,...,v_N)\in (L^{p'(\cdot)}(\Omega))^N.$\\
\begin{proposition}\label{prop}\cite{FZ1}
Let $(u_n)\subset L^{p(\cdot)}(\Omega)$ and $u\in L^{p(\cdot)}(\Omega)$ , we have
  \begin{enumerate}
    \item $\|u\|_{p(\cdot)}>1\;\;\;\Rightarrow\;\;\;\|u\|_{p(\cdot)}^{p^-}\leq\rho_{p(\cdot)}(u)
\leq\|u\|_{p(\cdot)}^{p^+},$
    \item $\|u\|_{p(\cdot)}<1\;\;\;\Rightarrow\;\;\;\|u\|_{p(\cdot)}^{p^+}\leq
\rho_{p(\cdot)}(u)\leq\|u\|_{p(\cdot)}^{p^-},$
    \item $\lim_{n\rightarrow\infty}\|u_n-u\|_{p(\cdot)}=0 \;\;\;\Leftrightarrow\;\;\;\lim_{n\rightarrow\infty}\rho_{p(\cdot)}(u_n-u)=0,$
    \item $\|u\|_{p(\cdot)}\leq \rho_{p(\cdot)}(u)+1,$
    \item $\rho_{p(\cdot)}(u)\leq\|u\|_{p(\cdot)}^{p^-}+\|u\|_{p(\cdot)}^{p^+}.$
  \end{enumerate}
\end{proposition}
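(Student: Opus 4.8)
The plan is to reduce all five assertions to two elementary facts about the modular $\rho_{p(\cdot)}$. The first is the pair of \emph{scaling inequalities}: for $\lambda\ge1$ one has $\lambda^{p^-}\rho_{p(\cdot)}(u)\le\rho_{p(\cdot)}(\lambda u)\le\lambda^{p^+}\rho_{p(\cdot)}(u)$, while for $0<\lambda\le1$ both inequalities are reversed; these follow immediately from $\rho_{p(\cdot)}(\lambda u)=\int_\Omega\lambda^{p(x)}|u(x)|^{p(x)}\,dx$ and the pointwise bounds $\lambda^{p^-}\le\lambda^{p(x)}\le\lambda^{p^+}$ (resp.\ reversed). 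The second is the \emph{unit ball property} $\|u\|_{p(\cdot)}\le1\iff\rho_{p(\cdot)}(u)\le1$, together with the attainment statement that $\rho_{p(\cdot)}\bigl(u/\|u\|_{p(\cdot)}\bigr)=1$ whenever $u\neq0$. I would establish the latter first, since it is the only point using the standing hypothesis $p\in C_+(\overline{\Omega})$: because $p^+<\infty$, the map $\lambda\mapsto\rho_{p(\cdot)}(u/\lambda)$ is finite on $(0,\infty)$ and, by dominated convergence, continuous there, so the infimum defining the Luxemburg norm is attained and its value is $1$.

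With these in hand, items (1) and (2) are immediate. For (1), assuming $\|u\|_{p(\cdot)}>1$, I would set $\lambda:=\|u\|_{p(\cdot)}$, note that $\rho_{p(\cdot)}(u/\lambda)=1$, and apply the scaling inequalities to the identity $u=\lambda\cdot(u/\lambda)$ with the factor $\lambda>1$; this yields $\lambda^{p^-}\le\rho_{p(\cdot)}(u)\le\lambda^{p^+}$, which is exactly $\|u\|_{p(\cdot)}^{p^-}\le\rho_{p(\cdot)}(u)\le\|u\|_{p(\cdot)}^{p^+}$. Item (2) is the same argument with $0<\lambda=\|u\|_{p(\cdot)}<1$, the scaling inequalities now running the other way (the case $u=0$ being a trivial $0\le0\le0$).

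Items (3)--(5) are then bookkeeping. For (3), write $v_n:=u_n-u$. If $\|v_n\|_{p(\cdot)}\to0$, then eventually $\|v_n\|_{p(\cdot)}<1$ and (2) gives $\rho_{p(\cdot)}(v_n)\le\|v_n\|_{p(\cdot)}^{p^-}\to0$; conversely, if $\rho_{p(\cdot)}(v_n)\to0$, then eventually $\rho_{p(\cdot)}(v_n)<1$, so the unit ball property forces $\|v_n\|_{p(\cdot)}\le1$, and $\|v_n\|_{p(\cdot)}=1$ is impossible (it would give $\rho_{p(\cdot)}(v_n)=1$), hence $\|v_n\|_{p(\cdot)}<1$ and (2) gives $\|v_n\|_{p(\cdot)}\le\rho_{p(\cdot)}(v_n)^{1/p^+}\to0$. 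For (4), if $\|u\|_{p(\cdot)}\le1$ the claim is trivial since $\rho_{p(\cdot)}(u)\ge0$, and if $\|u\|_{p(\cdot)}>1$ then $p^->1$ together with the lower bound in (1) gives $\|u\|_{p(\cdot)}\le\|u\|_{p(\cdot)}^{p^-}\le\rho_{p(\cdot)}(u)\le\rho_{p(\cdot)}(u)+1$. For (5), I would split into the cases $\|u\|_{p(\cdot)}>1$, $0<\|u\|_{p(\cdot)}<1$, $\|u\|_{p(\cdot)}=1$, and $u=0$, using respectively the upper bound in (1), the upper bound in (2), the identity $\rho_{p(\cdot)}(u)=1$, and $\rho_{p(\cdot)}(0)=0$; in each case one gets $\rho_{p(\cdot)}(u)\le\|u\|_{p(\cdot)}^{p^-}+\|u\|_{p(\cdot)}^{p^+}$.

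The only genuine content is the attainment/continuity fact $\rho_{p(\cdot)}\bigl(u/\|u\|_{p(\cdot)}\bigr)=1$, which is precisely where $p^+<\infty$ enters; everything else is the homogeneity-type estimate for $\rho_{p(\cdot)}$ combined with a split on whether the norm lies above or below $1$. Since the statement is quoted from \cite{FZ1}, one may alternatively just cite it; the above is the self-contained route.
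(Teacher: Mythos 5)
Your proof is correct. Note, however, that the paper itself offers no proof of this proposition: it is quoted verbatim from the reference \cite{FZ1} (Fan--Zhao), so there is nothing internal to compare against. Your argument --- establishing the attainment identity $\rho_{p(\cdot)}(u/\|u\|_{p(\cdot)})=1$ via continuity of $\lambda\mapsto\rho_{p(\cdot)}(u/\lambda)$ (where $p^+<\infty$ is genuinely used), then deducing (1)--(2) from the pointwise scaling bounds $\lambda^{p^-}\le\lambda^{p(x)}\le\lambda^{p^+}$ and (3)--(5) by case-splitting on whether the norm exceeds $1$ --- is precisely the standard proof given in that reference, and all the case analyses (including the edge cases $u=0$ and $\|u\|_{p(\cdot)}=1$ in (3) and (5)) are handled correctly.
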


\section{Basic assumptions and Main Results}
Let $q\in C_+(\bar{\Omega})$, $1<q^-\leq q(x)\leq q^+<p^-\leq p(x)\leq p^+<\infty$ and\\
$f:\Omega\times\mathbb{R}\rightarrow \mathbb{R}$ be a real-valued function such that:
\begin{description}
  \item[($f_1$)] $f$ satisfies the Carath\'eodory condition, i.e. $f(.,\eta)$ is measurable on $\Omega$ for all $\eta\in\mathbb{R}$ and
  $f(x,.)$ is continuous on $\mathbb{R}$ for a.e. $x\in\Omega.$
  \item[($f_2$)] $f$ has the growth condition $$|f(x,\eta)|\leq c_1(k(x)+|\eta|^{r(x)-1})$$ for a.e. $x\in\Omega$ and all
  $\eta\in\mathbb{R}$, where $c_1$ is a positive constant, $k\in L^{p'(x)}(\Omega)$, $k(x)\geq 0$ and $r\in C_+(\bar{\Omega})$ with
   $2<r^-\leq r(x)\leq r^+ < p^-.$
\end{description}
Let $A:W_0^{1,p(\cdot)}(\Omega)\rightarrow W^{-1,p'(\cdot)}(\Omega)$ be the nonlinear operator of Leray-Lions, which is defined by
$$A(u)=-diva(x,\nabla u).$$i.e.
\begin{equation}\label{oper1}
  \langle A(u),v \rangle = \int_\Omega a(x,\nabla u).\nabla v \,dx \mbox { for all } v\in W_0^{1,p(.)}(\Omega).
\end{equation}
The function $a$ is assumed to satisfy the conditions:
\begin{description}
    \item[($A_1$)] $a:\Omega\times \mathbb{R}^N\rightarrow \mathbb{R}^N$ is a Caratheodory function,i.e., $a(.,\xi)$ is measurable on $\Omega$ for all $\xi\in\mathbb{R}^N$ and $a(x,.)$ is continuous on $\mathbb{R}^N$ for a.e. $x\in\Omega$.
   \item[($A_2$)] There exist a positive function $b(x)$ in $L^{p'(\cdot)}(\Omega)$ and constant $c>0$ such that $|a(x,\xi)|\leq b(x)+c |\xi|^{p(x)-1}$ for all $\xi\in\mathbb{R}^N$ and a.e. $x\in\Omega$.
   \item[($A_3$)] $(a(x,\xi)-a(x,\xi')).(\xi-\xi')>0$ a.e. $x\in\Omega$, for all $\xi,\xi'\in \mathbb{R}^N,\xi\neq\xi'$.
   \item[($A_4$)] There exist a constant $c'>0$ such that $a(x,\xi).\xi\geq c'|\xi|^{p(x)}$.
\end{description}
\begin{lemma}\label{lem1}(\cite[Lemma 4.3]{AAB}
  Under assumptions $(f_1)$ and $(f_2)$, the operator \\$S:W_0^{1,p(x)}(\Omega)\rightarrow W^{-1,p'(x)}(\Omega)$ setting by
  $$\langle Su,v\rangle=\int_\Omega(\lambda |u|^{q(x)-2}u-f(x,u))v dx,\;\;\ \forall u,v\in W_0^{1,p(x)}(\Omega)$$ is compact.
\end{lemma}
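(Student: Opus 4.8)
The plan is to realise $S$ as a composition $S=j\circ N_g\circ i$, where $i$ is a \emph{compact} Sobolev embedding, $N_g$ is a bounded and continuous superposition (Nemytskii) operator between suitable variable-exponent Lebesgue spaces, and $j$ is a continuous embedding of $L^{p'(\cdot)}(\Omega)$ into the dual $W^{-1,p'(\cdot)}(\Omega)$. Since a composition of continuous maps is continuous and the composition of a compact map with continuous ones is again compact, this immediately gives that $S$ is compact.

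First I would merge the two lower-order terms into a single superposition function $g(x,\eta):=\lambda|\eta|^{q(x)-2}\eta-f(x,\eta)$, which is Carath\'eodory by $(f_1)$ and the continuity of $\eta\mapsto|\eta|^{q(x)-2}\eta$. Put $\gamma(x):=\max\{q(x),r(x)\}-1$. Using $(f_2)$ together with the elementary bounds $|\eta|^{q(x)-1}\le 1+|\eta|^{\gamma(x)}$ and $|\eta|^{r(x)-1}\le 1+|\eta|^{\gamma(x)}$ (valid since $q(x)-1,\,r(x)-1\le\gamma(x)$ and $\gamma(x)>0$, by treating $|\eta|\le 1$ and $|\eta|>1$ separately), one obtains a growth estimate
\[
|g(x,\eta)|\le \tilde k(x)+C\,|\eta|^{\gamma(x)}\qquad\text{a.e. }x\in\Omega,\ \forall\,\eta\in\mathbb{R},
\]
with $C>0$ and $0\le\tilde k\in L^{p'(\cdot)}(\Omega)$ ($\tilde k$ absorbing $k$ and the additive constants, which lie in $L^{p'(\cdot)}(\Omega)$ since $\Omega$ is bounded). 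Note that $\gamma\in C_+(\overline\Omega)$ — in fact $\gamma^-\ge r^--1>1$ because $r^->2$ — and, crucially, $\gamma^+<p^--1$ since $q^+<p^-$ and $r^+<p^-$.

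Next, set $\alpha(x):=\gamma(x)\,p'(x)$; then $\alpha\in C_+(\overline\Omega)$, and from $\gamma(x)\le\gamma^+<p^--1\le p(x)-1$ we get $\alpha(x)<p(x)$ a.e. on $\Omega$. For $u\in L^{\alpha(\cdot)}(\Omega)$ the growth estimate yields $g(\cdot,u)\in L^{p'(\cdot)}(\Omega)$, because $\int_\Omega|u|^{\gamma(x)p'(x)}\,dx=\int_\Omega|u|^{\alpha(x)}\,dx<\infty$; by the classical theory of superposition operators in variable-exponent Lebesgue spaces (see \cite{FZ1,KR}), the operator $N_g:L^{\alpha(\cdot)}(\Omega)\to L^{p'(\cdot)}(\Omega)$, $N_gu:=g(\cdot,u)$, is well defined, bounded and continuous. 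For the embeddings: since $\alpha(x)<p(x)$ a.e., composing the continuous embedding $L^{p(\cdot)}(\Omega)\hookrightarrow L^{\alpha(\cdot)}(\Omega)$ with the compact embedding $W_0^{1,p(\cdot)}(\Omega)\hookrightarrow L^{p(\cdot)}(\Omega)$ recalled in Section~2 gives a \emph{compact} map $i:W_0^{1,p(\cdot)}(\Omega)\to L^{\alpha(\cdot)}(\Omega)$; on the other side, each $w\in L^{p'(\cdot)}(\Omega)$ defines via $v\mapsto\int_\Omega wv\,dx$ and H\"older's inequality \eqref{Hol.in} a functional $j(w)\in W^{-1,p'(\cdot)}(\Omega)$, with $\|j(w)\|_{-1,p'(\cdot)}\le\|w\|_{p'(\cdot)}$ (take $v_0=w$, $F=0$ in the definition of the dual norm), so $j$ is bounded and linear. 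Comparing with \eqref{oper1}, $\langle Su,v\rangle=\int_\Omega g(x,u)v\,dx=\langle j(N_g(iu)),v\rangle$ for all $u,v\in W_0^{1,p(\cdot)}(\Omega)$, i.e. $S=j\circ N_g\circ i$.

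Finally I would conclude: $S$ is continuous as a composition of continuous maps; and if $B\subset W_0^{1,p(\cdot)}(\Omega)$ is bounded, then $i(B)$ is relatively compact in $L^{\alpha(\cdot)}(\Omega)$, $N_g$ carries the compact set $\overline{i(B)}$ onto a compact subset of $L^{p'(\cdot)}(\Omega)$, so $N_g(i(B))$ is relatively compact, and the bounded linear $j$ sends it to a relatively compact subset of $W^{-1,p'(\cdot)}(\Omega)$; hence $S$ maps bounded sets to relatively compact sets and is compact. The only genuinely non-routine point is the continuity of the superposition operator $N_g$ from $L^{\alpha(\cdot)}(\Omega)$ into $L^{p'(\cdot)}(\Omega)$; everything else is exponent bookkeeping — namely verifying $\alpha\in C_+(\overline\Omega)$ and $\alpha(x)<p(x)$, which is exactly where the structural hypotheses $q^+<p^-$ and $2<r^-\le r^+<p^-$ enter — together with the embedding and compactness facts already recorded in Section~2. (One could instead split $S=S_q-S_f$ and treat each superposition operator separately; merging them into $g$ merely keeps the exponent computations in one place.)
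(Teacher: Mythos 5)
Your proof is correct: the exponent bookkeeping checks out ($\gamma^+<p^--1$ gives $\alpha(x)=\gamma(x)p'(x)<p(x)$, so the Nemytskii operator $N_g$ lands in $L^{p'(\cdot)}(\Omega)$ and the factorization $S=j\circ N_g\circ i$ through the compact embedding yields compactness). The paper itself offers no proof here — it only cites \cite[Lemma 4.3]{AAB} — and the argument in that reference is precisely this standard decomposition (compact Sobolev embedding, bounded continuous superposition operator, continuous injection of $L^{p'(\cdot)}(\Omega)$ into the dual), so you have simply supplied in full the routine proof the paper outsources.
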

\begin{lemma}\label{lem3}
  Assume that the conditions $(A_1)-(A_4)$ hold. Then the operator $A$ defined by \eqref{oper1} is bounded, continuous, of type $(S_+)$ and coercive i.e. $\displaystyle\lim_{\|v\|_{1,p(\cdot)}\rightarrow\infty}\frac{<Av,v>}{\|v\|_{1,p(\cdot)}}=\infty.$
\end{lemma}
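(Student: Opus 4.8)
The proof consists in checking the four asserted properties, of which only the $(S_+)$ condition requires real work. For \textbf{boundedness}, write $A(v)=-\mathrm{div}\,a(\cdot,\nabla v)$ with $a(\cdot,\nabla v)\in (L^{p'(\cdot)}(\Omega))^N$, so by the definition of the dual norm $\|A(v)\|_{-1,p'(\cdot)}\le\sum_{i=1}^N\|a_i(\cdot,\nabla v)\|_{p'(\cdot)}$; applying $(A_2)$, the identity $(p(x)-1)p'(x)=p(x)$, and Proposition~\ref{prop} to pass between modulars and norms, this is bounded by a function of $\|\nabla v\|_{p(\cdot)}=\|v\|_{1,p(\cdot)}$ that stays bounded on bounded sets. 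For \textbf{continuity}, if $v_n\to v$ in $W_0^{1,p(\cdot)}(\Omega)$ then $\nabla v_n\to\nabla v$ in $(L^{p(\cdot)}(\Omega))^N$, and by $(A_1)$ and the growth bound $(A_2)$ the superposition operator $w\mapsto a(\cdot,w)$ is continuous from $(L^{p(\cdot)}(\Omega))^N$ into $(L^{p'(\cdot)}(\Omega))^N$ (the standard Nemytskii continuity for Carath\'eodory functions on variable-exponent spaces), so $a(\cdot,\nabla v_n)\to a(\cdot,\nabla v)$ in $(L^{p'(\cdot)}(\Omega))^N$, and the dual-norm estimate above gives $\|A(v_n)-A(v)\|_{-1,p'(\cdot)}\to0$. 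For \textbf{coercivity}, $(A_4)$ yields $\langle A(v),v\rangle=\int_\Omega a(x,\nabla v)\cdot\nabla v\,dx\ge c'\rho_{p(\cdot)}(\nabla v)$, and for $\|v\|_{1,p(\cdot)}=\|\nabla v\|_{p(\cdot)}>1$ Proposition~\ref{prop}(1) gives $\rho_{p(\cdot)}(\nabla v)\ge\|\nabla v\|_{p(\cdot)}^{p^-}$, so $\langle A(v),v\rangle/\|v\|_{1,p(\cdot)}\ge c'\|v\|_{1,p(\cdot)}^{p^--1}\to\infty$ since $p^->1$.

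For the \textbf{$(S_+)$ condition}, take $u_n\rightharpoonup u$ in $W_0^{1,p(\cdot)}(\Omega)$ with $\limsup_n\langle A(u_n),u_n-u\rangle\le0$. Since $A(u)\in W^{-1,p'(\cdot)}(\Omega)$ and $u_n-u\rightharpoonup0$, we have $\langle A(u),u_n-u\rangle\to0$, hence $\limsup_n\langle A(u_n)-A(u),u_n-u\rangle\le0$; but by the monotonicity $(A_3)$ the integrand $D_n(x):=(a(x,\nabla u_n)-a(x,\nabla u))\cdot(\nabla u_n-\nabla u)$ is nonnegative, so in fact $\int_\Omega D_n\,dx=\langle A(u_n)-A(u),u_n-u\rangle\to0$, i.e. $D_n\to0$ in $L^1(\Omega)$, and along a subsequence $D_n\to0$ a.e. in $\Omega$. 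The step I expect to be the main obstacle is deducing a.e. convergence of the gradients from this: for a.e. fixed $x$, the growth $(A_2)$ and coercivity $(A_4)$ prevent $(\nabla u_n(x))$ from being unbounded (otherwise $D_n(x)\to\infty$), so every subsequence has a further subsequence along which $\nabla u_n(x)\to\xi$ for some $\xi$; passing to the limit in $D_n(x)\to0$ gives $(a(x,\xi)-a(x,\nabla u(x)))\cdot(\xi-\nabla u(x))=0$, and the strict monotonicity $(A_3)$ forces $\xi=\nabla u(x)$. Hence $\nabla u_n\to\nabla u$ a.e. in $\Omega$.

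It remains to upgrade a.e. convergence to strong convergence in $(L^{p(\cdot)}(\Omega))^N$. By $(A_1)$, $a(x,\nabla u_n)\cdot\nabla u_n\to a(x,\nabla u)\cdot\nabla u$ a.e., and $a(x,\nabla u_n)\cdot\nabla u_n\ge c'|\nabla u_n|^{p(x)}\ge0$ by $(A_4)$. Using $D_n\to0$ in $L^1$, the weak convergence $\nabla u_n\rightharpoonup\nabla u$ in $(L^{p(\cdot)}(\Omega))^N$ against $a(\cdot,\nabla u)\in (L^{p'(\cdot)}(\Omega))^N$, and the weak convergence $a(\cdot,\nabla u_n)\rightharpoonup a(\cdot,\nabla u)$ in $(L^{p'(\cdot)}(\Omega))^N$ (which follows from the a.e. convergence just proved together with boundedness of $(a(\cdot,\nabla u_n))$ in that reflexive space, by $(A_2)$), one obtains $\int_\Omega a(x,\nabla u_n)\cdot\nabla u_n\,dx\to\int_\Omega a(x,\nabla u)\cdot\nabla u\,dx$. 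A nonnegative sequence converging a.e. with convergent integrals converges in $L^1$, so $\{a(x,\nabla u_n)\cdot\nabla u_n\}$, hence $\{|\nabla u_n|^{p(x)}\}$, is uniformly integrable; since $|\nabla u_n-\nabla u|^{p(x)}\le 2^{p^+-1}(|\nabla u_n|^{p(x)}+|\nabla u|^{p(x)})\to0$ a.e. with uniformly integrable majorant, Vitali's theorem gives $\rho_{p(\cdot)}(\nabla u_n-\nabla u)\to0$, whence $\|\nabla u_n-\nabla u\|_{p(\cdot)}\to0$ by Proposition~\ref{prop}(3), that is $u_n\to u$ in $W_0^{1,p(\cdot)}(\Omega)$. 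A final routine subsequence argument removes the passages to subsequences. Thus $A$ is of type $(S_+)$, and the difficulty, as indicated, lies entirely in the pointwise-gradient-convergence step where $(A_2)$, $(A_3)$ and $(A_4)$ must be combined.
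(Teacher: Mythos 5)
Your proof is correct and complete. Note, however, that the paper does not actually prove this lemma: its entire ``proof'' is a pointer to Lemma 3.2 of the cited reference \cite{AA}, specialized to the weight $w\equiv 1$. What you have written is precisely the standard Leray--Lions-type argument that such a reference carries out: boundedness and coercivity from $(A_2)$, $(A_4)$ and the modular--norm inequalities of Proposition \ref{prop}; continuity via the Nemytskii operator; and the $(S_+)$ property by reducing $\limsup\langle Au_n,u_n-u\rangle\le 0$ to $\int_\Omega D_n\,dx\to 0$ with $D_n\ge 0$, extracting a.e.\ convergence of the gradients from strict monotonicity $(A_3)$ together with the growth and coercivity bounds, and then upgrading to norm convergence via Scheff\'e/Vitali and a final subsequence argument. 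So your route agrees with the (outsourced) proof in substance; the added value of your write-up is that it makes the paper self-contained, and you correctly identify the pointwise gradient-convergence step as the only place where all three structural hypotheses $(A_2)$--$(A_4)$ must interact.
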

\begin{proof}
See the proof of Lemma 3.2 in \cite{AA} taking the weight function $w\equiv 1.$
\end{proof}

\par Let us first define the weak solution of problem \eqref{Pr1}.
\begin{definition}\label{Def}
  We call that $u\in W_0^{1,p(\cdot)}(\Omega)$ is a weak solution of \eqref{Pr1} if
  $$\int_{\Omega}a(x,\nabla u)\nabla v\;dx =\int_\Omega -\lambda|u|^{q(x)-2}uv\;dx+\int_{\Omega}f(x,u)v\;dx,\;\;\forall v\in W_0^{1,p(\cdot)}(\Omega).$$
\end{definition}
\begin{theorem}
We suppose that the assumptions $(f_1)$, $(f_2)$ and $(A_1)-(A_4)$ hold true, then there exists at least one weak solution of the problem (\ref{Pr1}) in $W_0^{1,p(\cdot)}(\Omega).$
\end{theorem}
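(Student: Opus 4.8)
The strategy is to recast the problem as an abstract operator equation and then apply Proposition~\ref{l.proof}. Put $X:=W_0^{1,p(\cdot)}(\Omega)$ and $X^*:=W^{-1,p'(\cdot)}(\Omega)$, let $A$ be the Leray--Lions operator of \eqref{oper1} and let $S$ be the operator of Lemma~\ref{lem1}. Comparing with Definition~\ref{Def}, one sees that $u\in X$ is a weak solution of \eqref{Pr1} if and only if
\[
Au+Su=0\quad\mbox{in }X^*,
\]
so the whole proof reduces to solving this equation.

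First I would invert the principal part. By Lemma~\ref{lem3}, $A$ is bounded, continuous, of class $(S_+)$ and coercive, and $(A_3)$ makes $A$ strictly monotone, hence injective; a bounded, continuous, coercive operator of class $(S_+)$ is surjective, so $A$ is a bijection of $X$ onto $X^*$ whose inverse $T:=A^{-1}\colon X^*\to X$ is bounded and continuous, and moreover again of class $(S_+)$ (this may be taken from \cite{Ber,AA}, or checked directly by a subsequence argument using that $A$ is coercive, continuous and of class $(S_+)$). On the other hand $S$ is compact by Lemma~\ref{lem1}, hence bounded, continuous and quasimonotone: if $u_n\rightharpoonup u$ then $Su_n\to Su$ strongly, so $\langle Su_n,u_n-u\rangle\to 0$. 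Performing the change of variable $v=Au$, i.e. $u=Tv$, the equation $Au+Su=0$ becomes
\[
v+S\circ Tv=0\quad\mbox{in }X^*,
\]
which is exactly the equation in Proposition~\ref{l.proof}, with $S$ quasimonotone and $T$ of class $(S_+)$.

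It remains to verify that $\Lambda=\{v\in X^*\mid v+tS\circ Tv=0\ \mbox{for some}\ t\in[0,1]\}$ is bounded. Given $v\in\Lambda$, set $u:=Tv\in X$, so that $Au+tSu=0$; testing with $u$ and using $(A_4)$,
\[
c'\,\rho_{p(\cdot)}(\nabla u)\le\langle Au,u\rangle=-t\,\langle Su,u\rangle\le|\langle Su,u\rangle|.
\]
Now $\langle Su,u\rangle=\lambda\int_\Omega|u|^{q(x)}\,dx-\int_\Omega f(x,u)u\,dx$, and by $(f_2)$, the H\"older inequality \eqref{Hol.in} with $k\in L^{p'(\cdot)}(\Omega)$, the Poincar\'e inequality \eqref{ptcar}, the continuous embeddings $L^{p(\cdot)}(\Omega)\hookrightarrow L^{q(\cdot)}(\Omega)$ and $L^{p(\cdot)}(\Omega)\hookrightarrow L^{r(\cdot)}(\Omega)$ (legitimate since $q^+,r^+<p^-$) and Proposition~\ref{prop}, one obtains
\[
|\langle Su,u\rangle|\le C\bigl(1+\|u\|_{1,p(\cdot)}^{q^+}+\|u\|_{1,p(\cdot)}^{r^+}\bigr)
\]
with $C$ independent of $v$ and $t$ (the sign of $\lambda$ is irrelevant here, precisely because $q^+<p^-$). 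If $\|u\|_{1,p(\cdot)}=\|\nabla u\|_{p(\cdot)}>1$, Proposition~\ref{prop}(1) gives $\rho_{p(\cdot)}(\nabla u)\ge\|u\|_{1,p(\cdot)}^{p^-}$, and since $\max\{q^+,r^+\}<p^-$ the inequality $c'\|u\|_{1,p(\cdot)}^{p^-}\le C(1+\|u\|_{1,p(\cdot)}^{q^+}+\|u\|_{1,p(\cdot)}^{r^+})$ forces $\|u\|_{1,p(\cdot)}$ to be bounded by a constant independent of $v,t$. Hence $\{Tv:v\in\Lambda\}$ is bounded in $X$, and, $A$ being a bounded operator, $\Lambda=\{A(Tv):v\in\Lambda\}$ is bounded in $X^*$. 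Proposition~\ref{l.proof} then provides $v_0\in X^*$ with $v_0+S\circ Tv_0=0$; setting $u_0:=Tv_0$ yields $Au_0+Su_0=0$, i.e. $u_0$ is the desired weak solution.

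I expect the main obstacle to be the a priori bound of the last paragraph: one must pass repeatedly between the Luxemburg norm and the modular $\rho_{p(\cdot)}$ through the two regimes of Proposition~\ref{prop}, and confirm that each contribution coming from $S$ (the $\lambda$-term, the $k$-term and the $|u|^{r(x)}$-term) is strictly subcritical with respect to the coercivity exponent $p^-$, which is exactly what $q^+,r^+<p^-$ buys us. A secondary technical point is the verification that $A^{-1}$ is of class $(S_+)$, which must be in hand before Proposition~\ref{l.proof} can be applied.
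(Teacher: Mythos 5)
Your proposal is correct and follows essentially the same route as the paper: reduce to $Au=-Su$, invert $A$ via the Minty--Browder theorem to get $T=A^{-1}$ bounded, continuous and of class $(S_+)$, note $S$ is compact hence quasimonotone, establish the a priori bound on $\Lambda$ from $(A_4)$, $(f_2)$, H\"older, Poincar\'e, the embeddings and Proposition~\ref{prop} using $q^+,r^+<p^-$, and conclude with Proposition~\ref{l.proof}. The only (harmless) differences are cosmetic: you deduce boundedness of $\Lambda$ from boundedness of $A$ rather than of $S$, and you are slightly more explicit about the sign of $\lambda$ being irrelevant in the estimate.
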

 \begin{proof}
Let $A\mbox{ and }S:W_0^{1,p(\cdot)}(\Omega)\rightarrow W^{-1,p'(\cdot)}(\Omega)$ be as in \eqref{oper1} and Lemma \ref{lem1} respectively. Then $u\in W_0^{1,p(\cdot)}(\Omega)$ is a weak solution of \eqref{Pr1} if and only if
  \begin{equation}\label{eq2}
    Au=-Su
  \end{equation}
Thanks to the assumbtion $(A_3)$ and the properties of operator $A$ seen in Lemma \ref{lem3} and in view of Minty-Browder Theorem \cite[Theorem 26A]{Z}, the inverse operator $T:=A^{-1}:W^{-1,p'(\cdot)}(\Omega)\rightarrow W_0^{1,p(\cdot)}(\Omega)$ is bounded, continuous and of type $(S_+)$. Moreover, note from Lemma \ref{lem1} that the operator $S$ is bounded, continuous and quasimonotone.\\ Therefore, equation (\ref{eq2}) is equivalent to
\begin{equation}\label{eq3}
  u=Tv \mbox{ and } v+SoTv=0.
\end{equation}
To solve equation (\ref{eq3}), we will apply the Proposition \ref{l.proof}. It is sufficient to show that the set
$$\Lambda:=\{v\in W^{-1,p'(\cdot)}(\Omega)|v+tSoTv=0 \mbox{ for some } t\in[0,1]\}$$
is bounded.\par
Indeed, let $v\in \Lambda$ and set $u:=Tv$, then $\|Tv\|_{1,p(\cdot)}=\|\nabla u\|_{p(\cdot)}.$\par
If $\|\nabla u\|_{p(\cdot)}\leq 1$, then $\|Tv\|_{1,p(\cdot)}$ is bounded.\par
If $\|\nabla u\|_{p(\cdot)}>1$, then we have by Proposition \ref{prop}
\begin{equation}\label{1}
  \|Tv\|_{1,p(\cdot)}^{p^-}=\|\nabla u\|_{p(\cdot)}^{p-}\leq \rho_{p(\cdot)}(\nabla u).
\end{equation}
By the assumption $(A_4)$, we have
$$a(x,\nabla u).\nabla u\geq c'|\nabla u|^{p(x)}.$$
Then
\begin{eqnarray*}
\rho_{p(\cdot)}(\nabla u)&=&\int_\Omega |\nabla u|^{p(x)}dx \\ \nonumber
  &\leq&\frac{1}{c'}\int_\Omega a(x,\nabla u).\nabla u  \\ \nonumber
  &=&\frac{1}{c'} \langle Au,u\rangle  \\ \nonumber
  &=& \frac{1}{c'}\langle v,Tv\rangle \\ \nonumber
   &=& \frac{-t}{c'}\langle SoTv,Tv \rangle.
\end{eqnarray*}
This implies that
\begin{equation}\label{2}
  \rho_{p(\cdot)}(\nabla u)\leq \frac{t}{c'}\int_\Omega -\lambda |u|^{q(x)-2}u+f(x,u)u dx.
\end{equation}
We get, by the inequalities \eqref{1}, \eqref{2}, the growth condition the growth condition $(f_2)$, the H\"older inequality \eqref{Hol.in}, the inequality (5) of Proposition \ref{prop} and the Young inequality the estimate
\begin{eqnarray*}
  \|Tv\|_{1,p(\cdot)}^{p^-} &\leq& const(\lambda\rho_{q(\cdot)}(u)+\int_\Omega|k(x)u(x)|\;dx+\rho_{r(\cdot)}(u)) \\
  &\leq& const(\|u\|_{q(\cdot)}^{q^-}+\|u\|_{q(\cdot)}^{q^+}+\|k\|_{p'(\cdot)}\|u\|_{p(\cdot)}+\|u\|_{r(\cdot)}^{r^-}+\|u\|_{r(\cdot)}^{r^+})\\
  &\leq& const(\|u\|_{q(\cdot)}^{q^-}+\|u\|_{q(\cdot)}^{q^+}+\|u\|_{p(\cdot)}+\|u\|_{r(\cdot)}^{r^-}+\|u\|_{r(\cdot)}^{r^+}).
\end{eqnarray*}
From the Poincar\'e inequality (\ref{ptcar}) and the continuous embedding $L^{p(x)}\hookrightarrow L^{q(x)}$ and\\
$L^{p(x)}\hookrightarrow L^{r(x)}$, we can deduct the estimate
$$\|Tv\|_{1,p(\cdot)}^{p^-} \leq const(\|Tv\|_{1,p(\cdot)}^{q^+}+\|Tv\|_{1,p(\cdot)}+\|Tv\|_{1,p(\cdot)}^{r^+}).$$
It follows that $\{Tv|v\in \Lambda \}$ is bounded.\\
Since the operator $S$ is bounded, it is obvious from (\ref{eq3}) that the set $\Lambda$ is bounded in $W^{-1,p'(\cdot)}(\Omega).$\\
Hence, in virtu of Proposition \ref{l.proof}, the equation $v+SoTv$ have at lest one non trivial solution $\bar{v}$ in  $W^{-1,p'(\cdot)}(\Omega).$
So, $$\bar{u}=T\bar{v}$$ is a weak solution of \eqref{Pr1}.
\end{proof}
\par Next, we consider the uniqueness of solutions of \eqref{Pr1}. To this end, we also need the following hypothesis on the convection term:
\begin{description}
  \item[($f_3$)] There exists $c_2\geq0$ such that
  $$(f(x,t)-f(x,s))(t-s)\leq c_2|t-s|^{q(x)}$$ for a.e. $x\in\Omega$ and all
  $t,s\in\mathbb{R}$.
\end{description}
Our uniqueness result reads as follows.
\begin{theorem}
Assume that $(f_1)-(f_3)$ and $(A_1)-(A_4)$ hold. The weak solution of \eqref{Pr1} is unique provided
$$\frac{2^{q^+}c_2}{\lambda}<1.$$
\end{theorem}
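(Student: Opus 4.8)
The plan is to assume that $u_1$ and $u_2$ are both weak solutions of \eqref{Pr1} and show $u_1=u_2$ by testing the two weak formulations against $v=u_1-u_2$ and subtracting. Writing out the two identities from Definition \ref{Def} and subtracting, I obtain
$$\int_\Omega\big(a(x,\nabla u_1)-a(x,\nabla u_2)\big)\cdot\nabla(u_1-u_2)\,dx = -\lambda\int_\Omega\big(|u_1|^{q(x)-2}u_1-|u_2|^{q(x)-2}u_2\big)(u_1-u_2)\,dx + \int_\Omega\big(f(x,u_1)-f(x,u_2)\big)(u_1-u_2)\,dx.$$
The left-hand side is nonnegative by $(A_3)$, in fact it is $>0$ unless $\nabla u_1=\nabla u_2$ a.e. For the first term on the right I would use the standard monotonicity inequality for the map $t\mapsto|t|^{q(x)-2}t$: for every $x$ with $q(x)>1$ one has $\big(|a|^{q(x)-2}a-|b|^{q(x)-2}b\big)(a-b)\ge 0$, so that term is $\le 0$ and in fact $\le -C_{q}\,\rho_{q(\cdot)}(u_1-u_2)$ up to the usual $2^{2-q(x)}$ constant; to keep things simple I may just use that it is $\le 0$ and bound its absolute value below, or directly invoke the elementary inequality $\big(|a|^{q-2}a-|b|^{q-2}b\big)(a-b)\ge 2^{2-q}|a-b|^{q}$. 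For the last term, hypothesis $(f_3)$ gives the pointwise bound $\le c_2|u_1-u_2|^{q(x)}$, hence $\int_\Omega(f(x,u_1)-f(x,u_2))(u_1-u_2)\,dx \le c_2\,\rho_{q(\cdot)}(u_1-u_2)$.

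Putting the pieces together, and discarding the nonnegative LHS, I arrive at
$$0\le \lambda\int_\Omega\big(|u_1|^{q(x)-2}u_1-|u_2|^{q(x)-2}u_2\big)(u_1-u_2)\,dx \le c_2\,\rho_{q(\cdot)}(u_1-u_2),$$
and combining with the lower bound $\lambda\,\big(|a|^{q(x)-2}a-|b|^{q(x)-2}b\big)(a-b)\ge \lambda\,2^{2-q(x)}|a-b|^{q(x)}\ge \lambda\,2^{-q^+}\cdot 2^{2}|a-b|^{q(x)}$ — more honestly, using $2^{2-q(x)}\ge 2^{2-q^+}$ when $q(x)\le q^+$ — I get something of the form $\lambda\,2^{2-q^+}\rho_{q(\cdot)}(u_1-u_2)\le c_2\,\rho_{q(\cdot)}(u_1-u_2)$, i.e.
$$\Big(\lambda\,2^{2-q^+}-c_2\Big)\,\rho_{q(\cdot)}(u_1-u_2)\le 0.$$
Under the stated assumption $\dfrac{2^{q^+}c_2}{\lambda}<1$, equivalently $c_2<\lambda\,2^{-q^+}<\lambda\,2^{2-q^+}$, the bracket is strictly positive, forcing $\rho_{q(\cdot)}(u_1-u_2)=0$, hence $u_1-u_2=0$ a.e. in $\Omega$, which together with $u_1,u_2\in W_0^{1,p(\cdot)}(\Omega)$ gives $u_1=u_2$.

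The routine ingredients are the Hölder/Luxemburg bookkeeping and the elementary vector inequalities; \textbf{the delicate point} is matching the numerical constants so that the threshold comes out exactly as $2^{q^+}c_2/\lambda<1$ rather than with a different power of $2$. The factor $2$ in the threshold presumably enters through the constant in the Hölder inequality \eqref{Hol.in} or, more likely, through a crude use of the inequality $\big(|a|^{q-2}a-|b|^{q-2}b\big)(a-b)\ge 2^{2-q}|a-b|^{q}$ combined with $2^{2-q(x)}\ge 2^{2-q^+}=4\cdot 2^{-q^+}$; one must be careful that the sign of $\lambda$ is implicitly positive here (otherwise $2^{q^+}c_2/\lambda<1$ is vacuous or meaningless), and that $q(x)\ge 2$ is \emph{not} assumed for $q$ — only $q^->1$ — so the pointwise inequality for the $q(x)$-power map must be the one valid for all exponents $>1$. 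I would state that elementary inequality as a preliminary observation, then carry out the subtraction argument above, and finally read off uniqueness from the strict positivity of $\lambda\,2^{2-q^+}-c_2$.
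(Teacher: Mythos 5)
Your proof is correct and follows essentially the same route as the paper: test both weak formulations with $v=u_1-u_2$, subtract, discard the elliptic term via $(A_3)$, bound the $q(x)$-power term below pointwise, bound the convection term above via $(f_3)$, and compare constants on $\rho_{q(\cdot)}(u_1-u_2)$. The only difference is in the constant of the elementary monotonicity inequality — you use $2^{2-q(x)}$ where the paper uses $(1/2)^{q(x)}$, and either suffices under the stated threshold — while the caveat you raise about needing $q(x)\geq 2$ (and implicitly $\lambda>0$) is a genuine point, but it is equally present in the paper's own proof, which invokes the inequality from \cite{T} ``since $q(x)\geq 2$'' even though the standing assumptions only require $q^->1$.
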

\begin{proof}
  Let $u_1, u_2\in W_0^{1,p(\cdot)}(\Omega)$ be two weak solutions of \eqref{Pr1}. by choosing\\
   $v=u_1-u_2$ in the Definition \ref{Def}, we have
$$\int_{\Omega}a(x,\nabla u_1)\nabla(u_1-u_2)\;dx +\int_\Omega \lambda|u_1|^{q(x)-2}u_1(u_1-u_2)\;dx=\int_{\Omega}f(x,u_1)(u_1-u_2)\;dx$$
and
$$\int_{\Omega}a(x,\nabla u_2)\nabla(u_1-u_2)\;dx +\int_\Omega \lambda|u_2|^{q(x)-2}u_2(u_1-u_2)\;dx=\int_{\Omega}f(x,u_2)(u_1-u_2)\;dx$$
Subtracting the above two equations, we have
$$\int_{\Omega}(a(x,\nabla u_1)-a(x,\nabla u_2))\nabla(u_1-u_2)\;dx +\int_\Omega \lambda(|u_1|^{q(x)-2}u_1-|u_2|^{q(x)-2}u_2)(u_1-u_2)\;dx$$
$$=\int_{\Omega}(f(x,u_1)-f(x,u_2))(u_1-u_2)\;dx.$$
By assumption $(A_3)$, we have
$$(a(x,\nabla u_1)-a(x,\nabla u_2))\nabla(u_1-u_2)>0.$$
Moreover, since $q(x)\geq2$ , then we have the following inequality (see \cite{T}):
$$(|u_1|^{q(x)-2}u_1-|u_2|^{q(x)-2}u_2)(u_1-u_2)\geq(\frac{1}{2})^{q(x)}|u_1-u_2|^{q(x)}.$$
So, by using assumption $(f_3)$, we have
\begin{eqnarray*}
  \lambda(\frac{1}{2})^{q^+}\int_\Omega|u_1-u_2|^{q(x)}\;dx &\leq& \int_{\Omega}(f(x,u_1)-f(x,u_2))(u_1-u_2)\;dx \\
   &\leq& c_2\int_\Omega|u_1-u_2|^{q(x)}\;dx.
\end{eqnarray*}
Consequently, when $\frac{2^{q^+}c_2}{\lambda}<1$, it follows that $u_1=u_2$ and so the solution of \eqref{Pr1} is unique.
\end{proof}



\begin{thebibliography}{99}

\bibitem{AA} M. Ait Hammou and E. Azroul, \textit{Nonlinear Elliptic Problems in Weighted Variable Exponent Sobolev Spaces by Topological Degree}, Proyecciones, \textbf{38(4)} (2019), 733--751.
\bibitem{AAB} M. Ait Hammou, E. Azroul, and B. Lahmi, \textit{Topological degree methods for a Strongly nonlinear p(x)-elliptic problem}, Rev. Colomb. de Mat., \textbf{53(1)} (2019),  27--39.
\bibitem{AAK} A. Abbassi, C. Allalou and A. Kassidi, \textit{Topological degree methods for a Neumann problem governed by nonlinear elliptic equation}, Moroccan J. of Pure and Appl. Anal., \textbf{6(2)} (2020), 231--242.
\bibitem{AO} M. V. Abdelkadery and A. Ourraouiz, \textit{Existence And Uniqueness Of Weak Solution For $p$-Laplacian Problem In $\mathbb{R}^N$}, Appl. Math. E-Notes, \textbf{13} (2013), 228--233.
\bibitem{AR} S.N. Antontsev and J.F. Rodrigues, \textit{On stationary thermo-rheological viscous flows}, Ann. Univ. Ferrara Sez. VII Sci. Mat., \textbf{52} (2006), 19--36.
\bibitem{Ber} J. Berkovits, \textit{Extension of the Leray-Schauder degree for abstract Hammerstein type mappings}, J. Differential Equations, \textbf{234} (2007), 289--310.
\bibitem{CLR} Y. Chen, S. Levine, and R. Rao, \textit{Variable Exponent, Linear Growth Functionals in Image Restoration}, SIAM Journal of Applied Mathematics, \textbf{66} (2006), 1383--1406.
\bibitem{DHHR} L. Dingien, P. Harjulehto, P. H\"ast\"o, and M. Ruzicka, \textit{Lebesgue and Sobolev Spaces with Variable Exponents}, Springer, Berlin, 2011.
\bibitem{FZ} X.L. Fan and Q.H. Zhang, \textit{Existence of Solutions for $p(x)$-Laplacian Dirichlet Problem}, Nonlinear Anal., \textbf{52} (2003) 1843--1852.
\bibitem{FZ1} X.L. Fan and D. Zhao, \textit{On the Spaces $L^{p(x)}(\Omega)$ and $W^{m,p(x)}(\Omega)$}, J. Math. Anal. Appl., \textbf{263} (2001), 424--446.
\bibitem{I} P.S. Ilia\c{s}, \textit{Dirichlet problem with $p(x)$-Laplacian}, Math. Report, \textbf{10(60),1} (2008), 43--56.
\bibitem{KR} O. Kov\'a\v cik and J. R\'akosn\'{\i}k, \textit{On spaces $L^{p(x)}$ and $W^{1,p(x)}$}, Czechoslovak Math. J., \textbf{41}(1991), 592--618.
\bibitem{NV} D. Nabab and J. V\'elin, \textit{On a nonlinear elliptic system involving the $(p(x), q(x))$-Laplacian operator with gradient dependence}, Complex Variables and Elliptic Equations (2021), DOI: 10.1080/17476933.2021.1885385
\bibitem{PRS} N. S. Papageorgiou, E. M. Rocha and V. Staicu, \textit{A multiplicity theorem for hemivariational inequalities with a $p$-Laplacian-like differential operator}, Nonlinear Anal. TMA, \textbf{69} (2008), 1150--1163.
\bibitem{R} M. R\.{u}\v{z}i\v{c}ka, \textit{Electrorheological fluids: modeling and mathematical theory, Lecture Notes in Mathematics}, Springer, Berlin, 2000.
\bibitem{SSE} R. Stanway, J. L. Sproston and A.K. El-Wahed, \textit{Applications of electro-rheological fluids in vibration control: a survey, Smart Mater. Struct.}, \textbf{5} (1996), 464--482.
\bibitem{T} F. Thelin, \textit{Local regularity properties for the solutions of a nonlinear partial diferential equation}, Nonlinear Anal. \textbf{6} (1982), 839--844.
\bibitem{XQD} X. Fan, Q. Zhang and D. Zhao, \textit{Eigenvalues of $p(x)$-Laplacian Dirichlet problem}, J. Math. Anal. Appl., \textbf{302} (2005) 306--317.
\bibitem{Z} E. Zeidler, \textit{Nonlinear Functional Analysis and its Applications, II/B: Nonlinear monotone Operators}, Springer-Verlag, New York, (1985).
\bibitem{Zh} V.V. Zhikov, \textit{Averaging of functionals in the calculus of variations and elasticity}, Math. USSR Izv., \textbf{29} (1987), 33--66.
\bibitem{ZZX} L. Zhao, P. Zhao and X. Xie, \textit{Existence and multiplicity of solutions for divergence type elliptic equation}, Electron. J. Differential Equations, \textbf{2011(43)} (2011),1--9.
\end{thebibliography}

\end{document}